\documentclass[12pt]{amsart}
\usepackage[utf8]{inputenc}
\usepackage[all]{xy}
\usepackage{graphicx}
\usepackage{amsmath,amsthm,bm,mathrsfs,amscd,tikz}
\usepackage{amsfonts,amssymb,mathrsfs}
\usepackage{verbatim}
\usepackage{float}
\usepackage{enumerate}
\RequirePackage{color}
\usepackage{xcolor}
\usepackage{soul}
\usepackage{comment}
\usepackage[colorlinks=true, pdfstartview=FitV, linkcolor=purple, citecolor=blue,
filecolor=violet, urlcolor=violet]{hyperref}
\textwidth=160mm
\textheight=43\baselineskip
\oddsidemargin=0mm
\evensidemargin=0mm

\numberwithin{figure}{section}
\numberwithin{table}{section}

\theoremstyle{plain}
\newtheorem{theorem}{Theorem}[section]
\newtheorem{prop}[theorem]{Proposition}

\theoremstyle{definition}
\newtheorem{remark}[theorem]{Remark}

\newtheorem{definition}[theorem]{Definition}
\newtheorem{example}[theorem]{Example}

\def \A{{\mathbb A}}
\def \R{{\mathbb R}}
\def \Q{{Q'}}
\def \D1{{\Delta_{\ge -1} }} 
 
\def \p{{\mathtt p}}
\def \cc{{\mathbf c}}
\def \gg{{\mathbf g}}
\def \dd{{\mathbf d}}

\newcommand{\C}{\mathcal{C}}

\DeclareMathOperator{\m}{mod}

\DeclareMathOperator{\rep}{rep}

\begin{document}

\title[Polytopal realizations of non-crystallographic associahedra]{Polytopal realizations of \\ non-crystallographic associahedra}
\author[A.~Felikson]{Anna Felikson}
\address{Department of Mathematical Sciences, Durham University, Upper Mountjoy Campus, Stockton Road, Durham, DH1 3LE, UK}
\email{anna.felikson@durham.ac.uk} 
\thanks{Research was supported in part by the Leverhulme Trust research grant RPG-2019-153 (PT) and a Royal Society Wolfson Research Merit Award (EY)}

\author[P.~Tumarkin]{Pavel Tumarkin}
\address{Department of Mathematical Sciences, Durham University,  Upper Mountjoy Campus, Stockton Road, Durham, DH1 3LE, UK}
\email{pavel.tumarkin@durham.ac.uk}

\author[E.~Yıldırım]{Emine Yıldırım}
\address{School of Mathematics, University of Leeds, Leeds, LS2 9JT, UK}
\email{pmtey@leeds.ac.uk}

%\date{June 2023}

\begin{abstract}
We use the folding technique to show that generalized associahedra for non-simply-laced root systems (including non-crystallographic ones) can be obtained as sections of simply-laced generalized associahedra constructed by Bazier-Matte, Chapelier-Laget, Douville, Mousavand, Thomas and Yıldırım. 
  
  \end{abstract}

\maketitle

\section{Introduction}

An associahedron is a polytope whose combinatorics is defined by families of Catalan objects.
For example, vertices of an associahedron can be indexed by triangulations of a convex polygon, edges correspond to flips of triangulations, and faces correspond to dissections (i.e., partial triangulations). Combinatorial description goes back to Tamari~\cite{T} and Stasheff~\cite{St}, since then many different polytopal realizations were constructed (see the paper by Ceballos, Santos and Ziegler~\cite{CSZ}, the recent survey by Pilaud, Santos and Ziegler~\cite{PSZ} and extensive bibliography therein).  

Generalized associahedra were introduced combinatorially by  Fomin and Zelevinsky \cite{FZY} for any finite (crystallographic) root system in the context of cluster algebras;  when restricted to type $A$ root system, the construction gives the classical associahedron. The combinatorial construction was extended to all Coxeter elements by Marsh, Reineke and Zelevinsky~\cite{MRZ} (the initial construction in~\cite{FZY} corresponds to a bipartite Coxeter element), and was reformulated without the crystallographic assumption by Fomin and Reading~\cite{FR}, see also~\cite{R2}.

The first polytopal realizations of generalized associahedra (for a bipartite Coxeter element) were constructed by Chapoton, Fomin and Zelevinsky~\cite{CFZ}, the normal fans of these polytopes are $\dd$-vector fans~\cite{FZ2}. Holweg, Lange and Thomas~\cite{HLT} provided realizations of generalized associahedra for arbitrary Coxeter element, the normal fans of these polytopes are Cambrian fans~\cite{R,RS}, which coincide with $\gg$-vector fans~\cite{FZ4} for finite root systems. More general constructions were considered by Holweg, Pilaud and Stella in~\cite{HPS}.  

In~\cite{BCDMTY}, Bazier-Matte, Chapelier-Laget, Douville, Mousavand, Thomas and Yıldırım provided a construction of generalized associahedra for simply-laced root systems (and any Coxeter element) by using representation theory of quivers. More precisely, they consider a vector space with coordinates indexed by almost positive roots, and use certain \emph{mesh relations} (which depend on a number of parameters) in the corresponding cluster category  to produce an affine subspace of linear dependencies. Intersection of this affine subspace with the positive orthant results in a realization of a generalized associahedron, where the normal fan is precisely the $\gg$-vector fan  (we recall the construction in more details in Section~\ref{prelim}). The construction above extends the earlier construction of the classical associahedron by Arkani-Hamed, Bai, He and Yan appearing in~\cite{ABHY}. The space of all polytopal realizations of $\gg$-vector fans was explored by Padrol, Palu, Pilaud and Plamondon in~\cite{P4}.    

\bigskip

In this note, we show that generalized associahedra for non-simply-laced root systems (including non-crystallographic ones) can be obtained as sections of simply-laced generalized associahedra presented in~\cite{BCDMTY}. A non-simply-laced root system $\Delta'$ can be obtained from a certain simply-laced root system $\Delta$ by a folding technique~\cite{L}, see~\cite{Stem} for details. The construction in~\cite{BCDMTY} depends on a number of parameters which can be indexed by positive roots in a simply-laced root system $\Delta$; we consider ``symmetric'' collections of parameters giving rise to a ``symmetric'' generalized associahedron $\A_\Delta$. We then construct a certain plane $\Pi$ using the restrictions coming from the folding technique, and consider the section of $\A_\Delta$ by $\Pi$. The main result can be formulated as follows (see Section~\ref{thm} for the details).

\begin{theorem}[Theorem~\ref{main}]
The section of $\A_\Delta$ by $\Pi$ is a realization of the generalized associahedron $\A_{\Delta'}$. The normal fan of  $\A_{\Delta'}$ is the corresponding $\gg$-vector fan, and it can be obtained as an orhogonal projection of the normal fan of $\A_\Delta$ onto $\Pi$.

  \end{theorem}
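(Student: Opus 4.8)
The plan is to exploit the fact that both objects in play carry intrinsic combinatorial descriptions coming from cluster theory, and then to check that the section operation respects these descriptions facewise. First I would recall precisely what the ``symmetric'' parameters mean: the construction of~\cite{BCDMTY} associates to each almost positive root of $\Delta$ a coordinate, and to each mesh a linear relation depending on parameters indexed by positive roots; the folding data consists of a group $G$ of diagram automorphisms of $\Delta$ acting on almost positive roots, and a symmetric collection is one where the parameters are constant on $G$-orbits. The key structural input is that this symmetry forces the defining affine subspace of $\A_\Delta$, and hence the polytope and its normal fan, to be $G$-invariant. The plane $\Pi$ should then be defined as (a translate of) the fixed subspace of the induced $G$-action on the ambient space, intersected with the affine span of $\A_\Delta$; equivalently it is cut out by the equations $x_\alpha = x_{\beta}$ for $\alpha,\beta$ in the same $G$-orbit.

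Next I would identify the section $\A_\Delta \cap \Pi$ as a polytope and compute its normal fan. Since $\A_\Delta$ is $G$-invariant and $\Pi$ is its fixed-point set, a standard averaging argument shows the section is nonempty, full-dimensional inside $\Pi$, and that its faces are exactly the intersections with $\Pi$ of the $G$-invariant faces of $\A_\Delta$; the $G$-invariant faces of a simplicial (or simple, dually) fan correspond under folding to the faces of the folded fan. Concretely, the normal fan of a section by a linear subspace is the image of those cones of the original normal fan that meet the orthogonal complement appropriately under orthogonal projection onto $\Pi$; I would verify that for the $\gg$-vector fan of $\Delta$ this projection produces exactly the $\gg$-vector fan of $\Delta'$. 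This is where I would invoke the folding description of $\gg$-vectors: almost positive roots of $\Delta'$ are in bijection with $G$-orbits of almost positive roots of $\Delta$, cluster variables fold to cluster variables, and the $\gg$-vector of a folded object is the projection (average over the orbit) of the $\gg$-vectors in the orbit. Hence the cones spanned by $\gg$-vectors of compatible clusters in $\Delta$, when restricted to $\Pi$ and projected, are precisely the cones spanned by $\gg$-vectors of clusters in $\Delta'$.

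The main obstacle I anticipate is the compatibility of the two notions of ``folding'': the representation-theoretic mesh relations used to define $\A_\Delta$ versus the root-theoretic/cluster-combinatorial folding that produces $\Delta'$ and its $\gg$-vector fan. One must check that a $G$-symmetric choice of mesh parameters descends to a consistent system of relations on the quotient, i.e.\ that the folded mesh relations are exactly those one would write down directly for $\Delta'$ (or at least cut out the same affine subspace inside $\Pi$). Subtleties here include orbits of size greater than one where the arithmetic of the relation acquires multiplicities (reflecting the different root lengths in $\Delta'$), and the need to ensure the section has the right dimension rather than being a lower-dimensional slice. I would handle this by working type-by-type through the folding pairs $(\Delta,\Delta')$ only if a uniform argument fails, but I expect the uniform argument via $G$-invariance of the normal fan plus the known folding behavior of $\gg$-vector fans to suffice, with the representation-theoretic input entering only to certify that $\A_\Delta$ really does have the $\gg$-vector fan as its normal fan --- which is already the content of the result of~\cite{BCDMTY} that we are allowed to assume.

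Finally I would assemble the pieces: the section is a polytope whose normal fan is the $\gg$-vector fan of $\Delta'$, and a polytope is determined up to the combinatorial type of generalized associahedron by having this normal fan; therefore the section is a realization of $\A_{\Delta'}$, and the projection statement for the normal fans is exactly the computation carried out in the second step.
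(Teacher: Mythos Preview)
Your outline is essentially correct for the crystallographic foldings, and indeed the paper remarks that in that case the result can be derived from~\cite{AHL}. However, there is a genuine gap in the non-crystallographic cases, which are the main point of the theorem.

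Your entire framework rests on a group $G$ of diagram automorphisms of $\Delta$, with $\Pi$ taken to be the fixed subspace and the section analyzed via a ``standard averaging argument''. For the foldings $D_6\to H_3$, $E_8\to H_4$, and the various $ADE\to I_2(m)$, no such group exists: $E_8$ has no nontrivial diagram automorphism at all, and the $\mathbb{Z}/2$ symmetry of $D_6$ produces $B_5$, not $H_3$. These are \emph{weighted} unfoldings in the sense of~\cite{DT1}: the vertices of $Q$ are partitioned into blocks, but the blocks are not orbits of any symmetry, and each vertex carries a weight $w_j$ (golden ratio, Chebyshev values, etc.). Accordingly the paper defines $\Pi$ not by the equations $t_{ki}=t_{kj}$ but by $w_j t_{ki}=w_i t_{kj}$, and the parameters are required to satisfy $w_i c_{kj}=w_j c_{ki}$; this is not a fixed-point locus of any linear $G$-action, so the averaging/fixed-point machinery you invoke simply does not apply. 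Likewise, the $\gg$-vector of a folded variable is not an average over an orbit but the weighted sum $\lambda'_{[i]}=\sum_{P(j)=[i]} w_j\lambda_j$.

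The paper replaces the missing symmetry with direct arguments: it shows by hand that $\Pi$ meets every facet of $\A_Q$ in the correct dimension (exhibiting an explicit symmetric seed through each facet), computes the orthogonal projection onto $\Pi$ in the weighted basis, and then invokes the results of~\cite{DT1,DT2} asserting that $\gg$-vectors for $H_3$, $H_4$, $I_2(m)$ are exactly the rescaled projections $\pi_w$ of the $\gg$-vectors of the unfolding. The identification of the normal fan of the section with the $\gg$-vector fan of $Q'$ is then a cone-by-cone argument, and the final step---that a polytope with this normal fan is the generalized associahedron---requires checking that the non-crystallographic $\gg$-vectors still mutate according to~\cite[Conjecture~7.12]{FZ4}, so that the argument of~\cite{RS} goes through. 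Your sketch would need to incorporate the weights throughout and replace the $G$-invariance arguments with these explicit verifications.
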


  We note that in the case $\Delta'$ is crystallographic, the theorem can be derived from the results of Arkani-Hamed, He and Lam~\cite{AHL}. %However, our techniques and proofs in this paper are different.
  
  %Furthermore, we will consider the cases $\mathbf{H}_2, \mathbf{H}_3, \mathbf{H}_4$ and $\mathbf{I}_2$ for a construction of associahedra of those types.
  %More generally, a combinatorial statement similar to the theorem can also be derived from the results of Qiu and Zhang~\cite{QZ}, who show that the objects defined in~\cite{DT1,DT2} can be considered as "fusion-stable".    
  
We recall all necessary terms and constructions in Section~\ref{prelim}, formulate our main result in Section~\ref{thm} and give the proofs in the final Section~\ref{proof}.

\subsection*{Acknowledgements}
We are grateful to Nathan Reading and Salvatore Stella for many helpful discussions. We also thank Nathan Reading for very useful comments on the first version of the paper. The work was initiated at the Isaac Newton Institute for Mathematical Sciences, Cambridge; we are grateful to the organizers of the program “Cluster algebras and representation theory”, and to the Institute for support and hospitality during the program; this work was supported by EPSRC grant no EP/R014604/1.

\section{Preliminaries}
\label{prelim}

We start with describing the construction of the generalized associahedra for the simply-laced Dynkin diagrams by~\cite{BCDMTY}. We then remind the essential facts about $\gg$-vectors and weighted unfoldings. While talking about generalized associahedra we will omit the word ``generalized'' if there is no ambiguity.

\subsection{Simply-laced generalized associahedra}

In~\cite{BCDMTY}, authors use representation theoretical techniques to generate certain equations and their intersection of the positive orthant to obtain associahedra. We recall  the key parts of the construction below, see~\cite{BCDMTY} for more details.

Let $Q$ be an orientation of a simply-laced Dynkin diagram, and denote by $\rep Q$ the category of representations of $Q$. Let $D^b(\rep Q)$ be the bounded derived category of $\rep Q$. Note that $\rep Q$ can be embedded in $D^b(\m kQ)$ by sending each indecomposable object $M$ in $\rep Q$ to a complex in  $D^b(\rep Q)$ whose only nonzero object is $M$. We consider a subcollection $\C_Q$ of $D^b(\m kQ)$, where we take the all indecomposable objects in $\rep Q$ along with the shifted projectives. This subcollection $\C_Q$ can be thought as \emph{cluster category} without some morphisms but we do not need to go into details in this paper. Let $N$ be the number of indecomposables in $\C_Q$, and let $V$ be a real vector space of dimension $N$ (note that $N=n(h+2)/2$, where $h$ is the corresponding Coxeter number and $n$ is the rank of $Q$). Consider an indeterminate $t_M$ for each object $M\in \C_Q$ as a coordinate function in $V$, and positive constants $c_{M}$ for each representation $M \in \rep Q$. An associahedron is then constructed for each choice of parameters $c_M$. Given a collection of parameters $c_M$, we consider the following hyperplanes in $\R^n$: 

\[t_{\tau M}+\ t_{M}=\sum_{E} t_{E} + c_M,\] where $E\in\C_Q$ sits in an Auslander-Reiten sequence $0\rightarrow \tau M \rightarrow E \rightarrow M \rightarrow 0$ in $D^b(\rep Q)$ for the objects in $\C_Q$. % and we define $\tau P_i[1]=I_i$.

It has been shown in~\cite{BCDMTY} that the intersection of this set of hyperplanes with the positive orthant provides a construction of a generalized associahedron. For type $A$, one recovers the classical associhedra as constructed in~\cite{ABHY}.
%These equations also appears in the study of scattering amplitudes in string theory~\cite{ABHY}. 

\begin{example}
  \label{cat-a3}
    Let $Q=\xymatrix{1 \ar[r] & 2 & 3\ar[l]}$. Then the category $C_Q$ can be drawn as follows.

\[
\xymatrix{ t_{11}\ar[dr] &   {\color{blue} c_{11}}     & t_{21}\ar[dr] & {\color{blue} c_{21}} & t_{31\ar[dr]} &\\
           & t_{12}\ar[dr]\ar[ur] & {\color{blue} c_{12}} & t_{22}\ar[dr]\ar[ur] & {\color{blue} c_{22}} & t_{32}\\
           t_{13}\ar[ur] &   {\color{blue} c_{13}}     & t_{23}\ar[ur] & {\color{blue} c_{23}} & t_{33}\ar[ur] &
}
\]

The equations are
\begin{align*}
    t_{11}+\ t_{21} &= t_{12} + c_{11}\\
    t_{13}+\ t_{23} &= t_{12} + c_{13}\\
    t_{12}+\ t_{22} &= t_{21} + t_{23} + c_{12}\\
    t_{21}+\ t_{31} &= t_{22} + c_{21}\\
    t_{23}+\ t_{33} &= t_{22} + c_{23}\\
    t_{22}+\ t_{32} &= t_{31} + t_{33} + c_{22}
\end{align*}

The intersection of the affine subspace generated by these equations and the positive orthant gives us the associahedron of type $A_3$.

We show an explicit solution when all the parameters $c_{kj}$ are set to be $1$ in Figure~\ref{fig:A3}. To be able to draw the associahedron, we consider the projection to the last three coordinates, i.e. $t_{31},\ t_{32}$ and $t_{33}$.
%{\color{blue}{ Note that this projection can be taken on any compatible collection of cluster variables.}}
%Then, we obtain Figure~\ref{fig:A3}.

\begin{figure}[h]
\includegraphics[width=9cm]{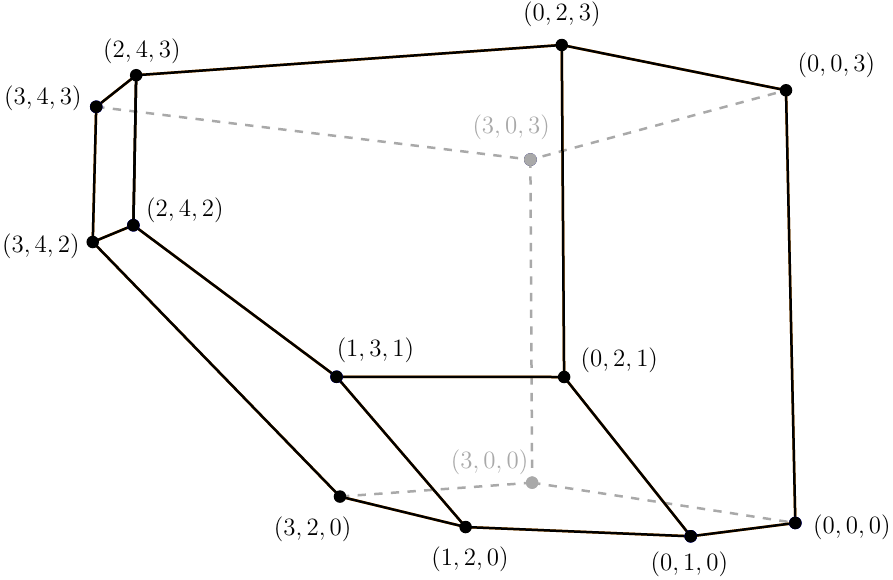}
\caption{Associahedron of type $A_3$, see Example~\ref{cat-a3}.}
\label{fig:A3}
\end{figure}

\end{example}

The equations above can be rephrased using combinatorics of root systems as follows. Let $Q$ be an orientation of a simply-laced Dynkin quiver and $\Delta$ be a root system corresponding to $Q$ with $\Delta^+$ being the set of positive roots. Let $\Delta_{\geq -1}$ denote the set of almost positive roots (i.e., the set of positive roots together with negative simple roots). Then objects in $\C_Q$ are in a bijective correspondence with elements of $\Delta_{\geq -1}$, so indeterminates $t_M$ for each $M\in \C_Q$ can be understood as $t_{\alpha}$ for each $\alpha\in \Delta_{\geq -1}$, and the constants $c_M$ for each $M \in \rep Q$ can be understood as $c_{\alpha}$ for each $\alpha \in \Delta^+$. Using the compatibility degree  $(\cdot || \cdot)$ on $\Delta_{\geq -1}$ (see e.g.~\cite{YZ} or~\cite{S}), the equations can then be rewritten as

\begin{equation}~\label{x-eqs}
    t_{\beta} + t_{\alpha}=\sum_{\gamma} t_{\gamma} + c_{\alpha},
\end{equation}
where  $(\beta || \alpha) = 1$ and $\gamma\in \Delta_{\geq -1}$ such that $(\gamma || \alpha) = (\gamma || \beta) =0$. 
In other words, in the language of cluster combinatorics the indeterminates $t_{\alpha}$ are indexed by cluster variables $x_\alpha$ of the corresponding cluster algebra. 

The same equations can also be obtained by tropicalization of $u$-equations~\cite{AHL}.

\subsection{$\gg$-vectors}
\label{g}
Given a cluster algebra with principal coefficients, every cluster variable can be assigned with an integer {\em $\gg$-vector} as a multi-degree of the Laurent expansion with respect to the initial seed~\cite{FZ4}. A {\em $\gg$-vector fan} is a simplicial fan spanned by $\gg$-vectors with maximal cones corresponding to seeds of the cluster algebra. The $\gg$-vector fan is complete if and only if the cluster algebra is of finite type~\cite{FZ4}. A generalized associahedron in the crystallographic case can be realized as a polytope whose normal fan is the $\gg$-vector fan~\cite{R,RS,HLT,HPS}. In particular, the normal fan to the realization of the generalized associahedron in the construction of~\cite{BCDMTY} is precisely the corresponding $\gg$-vector fan.

In the skew-symmetric case, $\gg$-vectors satisfy the equations~(\ref{x-eqs}) when all the parameters $c_{kj}$ are set to zero (\cite{BCDMTY}). %proof of Lemma 1.
% We can define $g$-vectors recursively by setting the initial cluster variables to the standard basis vectors and using these equations.

In the non-crystallographic cases $\gg$-vectors and  $\gg$-vector fans were defined in~\cite{DT1,DT2} via Nakanishi--Zelevinsky tropical duality~\cite{NZ}. It was shown that the definitions are compatible with quiver mutations, and also with weighted unfoldings (see Section~\ref{unf}).

%Each cluster variable are assigned certain vectors, called \emph{$g$-vectors}. These $g$-vectors satisfy the equations in ~\ref{x-eqs} when all the $c$'s are zero. We can define $g$-vectors recursively by setting the initial cluster variables to the standard basis vectors and using these equations. Since we work with non-simply laced root system by folding of the ADE Dynkin types, we can obtain $g$-vectors for the cluster variables in non-simply laced case.

\subsection{Unfoldings}
\label{unf}

Our main tool is a {\em weighted unfolding}, see~\cite{DT1} for the general definition (which is a generalization of the notion of unfolding introduced by Zelevinsky, see~\cite{FST2}). For the purposes of this paper, we are interested in finite types only. Every non-simply-laced root system $\Delta'$ (including non-crystallographic ones) can be obtained from certain (non-unique) simply-laced root system $\Delta$ by a folding technique (see~\cite{L,Cr,Stem} for details). Under folding (which is a linear map), a simply-laced root system $\Delta$ is taken to a union $\bigcup w_k\Delta'$, where $\{w_k\}$ is a certain set of weights (in the crystallographic case all $w_j=1$, so the map can be considered as a projection; see~\cite{MP,DT1,DT2} for weights in the non-crystallographic cases). For convenience, we reproduce the list of unfoldings with their weights in Table~\ref{weights}.

\begin{table}
  \caption{\scriptsize {\normalsize Unfoldings of non-simply-laced root systems and their weights.}\vphantom{$\int\limits_.$}\\
            For crystallographic non-simply-laced root systems we use weighted quiver notation: the corresponding exchange matrices are non skew-symmetric, and $\alpha\stackrel{a,b}{\longrightarrow} \beta$ means that $d_\alpha a=d_\beta b$ in the symmetrizing diagonal matrix $(d_i)$. For non-crystallographic root systems, the exchange matrix is skew-symmetric, and the weights of arrows are equal to the moduli of matrix elements. Quivers for unfoldings of dihedral types are oriented from white to black. $U_i$ denote the Chebyshev polynomials of the second kind, and $\varphi=2\cos(\pi/5)$ is the golden ratio. Quiver $Q'$ is defined up to sink-source mutations, the corresponding composite mutations should be applied to $Q$.}

  \label{weights}
 \includegraphics[width=\textwidth]{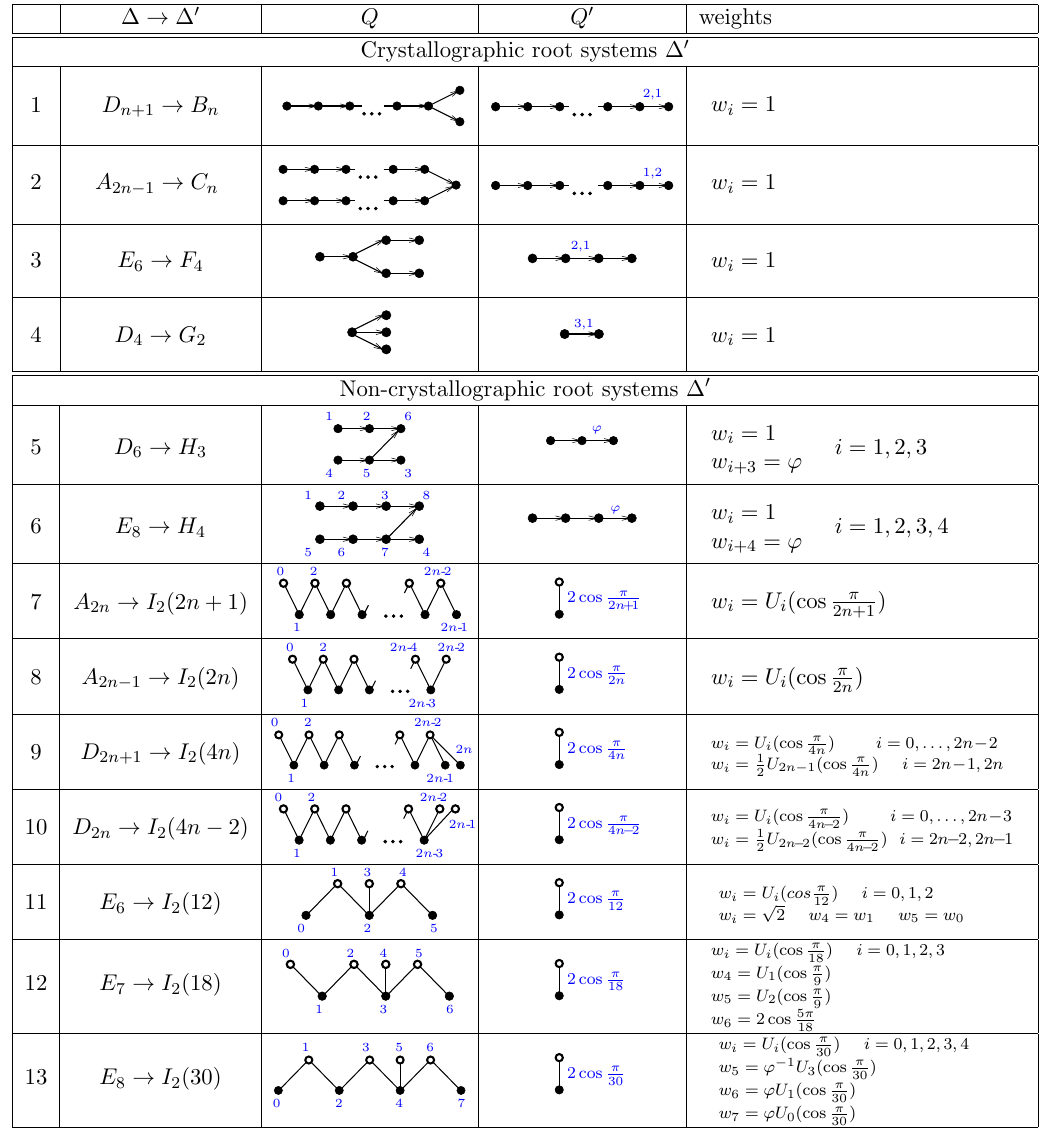}
\end{table}

\begin{comment}
\begin{enumerate}
    \item $\mathbf{D}_6 \longrightarrow \mathbf{H}_3$, and $\mathbf{E}_8 \longrightarrow \mathbf{H}_4$,
    \item $\mathbf{D}_{n+1} \longrightarrow \mathbf{B}_n$,
    \item $\mathbf{A}_{2n-1} \longrightarrow \mathbf{C}_n$,
    \item $\mathbf{E}_6 \longrightarrow \mathbf{F}_4$,
    \item $\mathbf{A}_{n}\longrightarrow \mathbf{I}_2(n+1)$,
    \item $\mathbf{D}_{n-1}\longrightarrow \mathbf{I}_2(2n)$, 
    \item $\mathbf{E}_6\longrightarrow \mathbf{I}_2(12)$,
    \item $\mathbf{E}_7\longrightarrow \mathbf{I}_2(18)$,
    \item $\mathbf{E}_8\longrightarrow \mathbf{I}_2(30)$.
      
\end{enumerate}
\end{comment}

Weighted unfoldings are compatible with mutations of quivers (or diagrams) and seeds \cite{FST2, Du, DT1}.
% , see also~\cite{QZ}.
In the crystallographic case,  variables of folded cluster algebra are obtained by specialization of the  variables of its unfolding. In particular, the same projection takes $\cc$-vectors and $\gg$-vectors of the unfolding to the  $\cc$-vectors and $\gg$-vectors of the folded cluster algebra. The latter holds for non-crystallographic quivers of finite type as well (up to a multiplication by the corresponding weight, see~\cite{DT1,DT2}).\\

  \noindent
  {\bf Notation}
  \begin{itemize}
    \item
      Given a folding of root systems $\Delta\to\Delta'$, we denote by $\Q$ an acyclic quiver (or diagram) of type $\Delta'$ (for non-crystallographic root system one arrow of $\Q$ has a non-integer weight).  Let $Q$ be the quiver of type $\Delta$ that can be folded to $\Q$, let $\mathcal A$ be the corresponding skew-symmetric cluster algebra with principal coefficients. We denote by $n$ and $n'$ the number of vertices of $Q$ and $Q'$ respectively.

    \item
     Quiver $Q$ as above defines uniquely a Coxeter element of $\Delta$. We will denote by $\A_Q$ the corresponding generalized associahedron. 

\item
  
We denote by $\p$ the folding map of $\Delta$ to $\bigcup w_k\Delta'$, where $w_k$ is the set of weights (see Table~\ref{weights}). Given $\alpha\in\Delta_{\ge -1}$, we denote by $x_\alpha$ the corresponding cluster variable of $\mathcal A$ (see~\cite{FZ2}). If $\p(\alpha)\in w\Delta'$, then $w$ is called the {\em weight} of $x_\alpha$ (or, equivalently, the weight of the indeterminant $t_\alpha$).

\item

  We denote by $P$ the map of vertices of $Q$ onto the vertices of $\Q$ induced by the folding. By default, we will denote by $[i]$ the indices of vertices of $\Q$. Given a vertex with index $[i]$ of $Q'$, the collection of indices  $i_1,\dots i_l$ of vertices of $Q$ such that $P(i_j)=[i]$ forms a block of the unfolding (see~\cite{FST2,DT1}). Thus, the indices of vertices of $\Q$ can be understood as the blocks of vertices of $Q$.

%  Let $E_1,\dots,E_m$ be the blocks of unfolding of sizes $d_1,\dots, d_n$, let $m=\sum_{i=1}^n d_i$.
%Let $P$ be the map taking indices of vertices in the quiver $Q$ to indices of vertices of $\Q$,
%$$P(j)=[i] \quad \forall j\in E_i.$$

\item

%$\p$ - projection of one root system to another
%Orthogonal projection of one roots system to another

 Let $P$ be the map of vertices of $Q$ onto the vertices of $\Q$ as above, assume that $Q$ belongs to the initial seed of $\mathcal A$.   We call a seed of $\mathcal A$ {\it symmetric} if it is obtained from the initial seed by iterative composite mutations compatible with $P$ (i.e., every mutation in $i_l$ comes together with  mutations in all vertices from the same block). In terms of the folding map $\p$, this is equivalent to the following: together with every variable $x_\alpha$ the seed contains all variables $x_{\tilde\alpha}$ with $\p(\tilde\alpha)=\tilde w\p(\alpha)$, $\tilde w>0$.

\item

We use the notation $x_{ki}$ for cluster variables of $\mathcal A$ (as well as for the corresponding indeterminates $t_{ki}$ and constants $c_{ki}$) as follows. The second index $i$ corresponds to the row of the Auslander-Reiten quiver (or, equivalently, to the vertex of $Q$), so $1\le i\le n$, where $n$ is the number of vertices of $Q$. The first index $k$ corresponds to the number of applications of the (inverse of the) Auslander-Reiten translation $\tau$ to the initial variables, see Example~\ref{cat-a3}. If $h$ the Coxeter number of $\Delta$, $k$ varies from $1$ to $(h+2)/2$ if $h$ is even, or from $1$ to  $(h+3)/2$ if $h$ is odd.  

In these  terms, a seed is symmetric if with every variable $x_{ki}$ the seed contains all variables $x_{kj}$ with $i,j$ belonging to the same block.

%For each block $E_i$ (=set of variables projecting to $[i]$)
%coordinate 1 

%$I$ indices of the quiver, $\hat I$ indices of the diagram
%$f: I \to I
%f(l)=[i] \forall l\in E_i$

\end{itemize}

\begin{remark}
  \label{rows}
  As it was shown in~\cite{DT1,DT2}, for a given $j$ all variables $x_{kj}$ have the same weight $w_j$.

  \end{remark}

\section{Construction and main result}
\label{thm}

Consider a weighted acyclic quiver $Q'$ of type $\Delta'$ and its  weighted unfolding $Q$ of type $\Delta$ with weights $w_j$. Let $\A_Q$ be the generalized associahedron of $Q$ constructed in~\cite{BCDMTY} with parameters $c_{kj}$ satisfying $w_ic_{kj}=w_jc_{ki}$ for every pair $(i,j)$ belonging to one block.   

\begin{definition}
  \label{def_Pi}
 Let $\Pi$ be the plane given by the intersection of all hyperplanes
of the form $w_jt_{ki}=w_it_{kj}$ where $i$ and $j$ belong to the same block.

\end{definition}

\begin{remark}
  A straightforward computation shows that the requirement on parameters $c_{kj}$ above guarantees that the dimension of $\Pi$ is equal to $n'$, the rank of $Q'$. 
    \end{remark}

%Notice that  $dim(\Pi)=n$ (as we have the right number of equations on initial variables). 

%\begin{remark}
%The equations in Definition~\ref{def_Pi} imply that we will have the weighted symmetry in every seed. The equations will propagate (see Fig. ... )
%\end{remark}

    \begin{example}
      \label{ex-c2}
In the notation of Example~\ref{cat-a3}, set $c_{11} =c_{13}$ and $c_{21} =c_{23} $, and assume $t_{11}=t_{13}$. Then equations in the same example imply that   $t_{21}=t_{23}$ and  $t_{31}=t_{33}$. Thus, the plane $\Pi$ has dimension $2$ with coordinates on it given by any two indeterminants $t_{ki}$ taken from the first two distinct rows corresponding to compatible cluster variables (say, $t_{11}$ and $t_{22}$). By taking the intersection of $\Pi$ with the associahedron of type $A_3$ (see Fig.~\ref{fig:A3}), we obtain the associahedron of type $C_2$ (see Fig.~\ref{fig:C2}).

\begin{figure}
\includegraphics[scale=0.6]{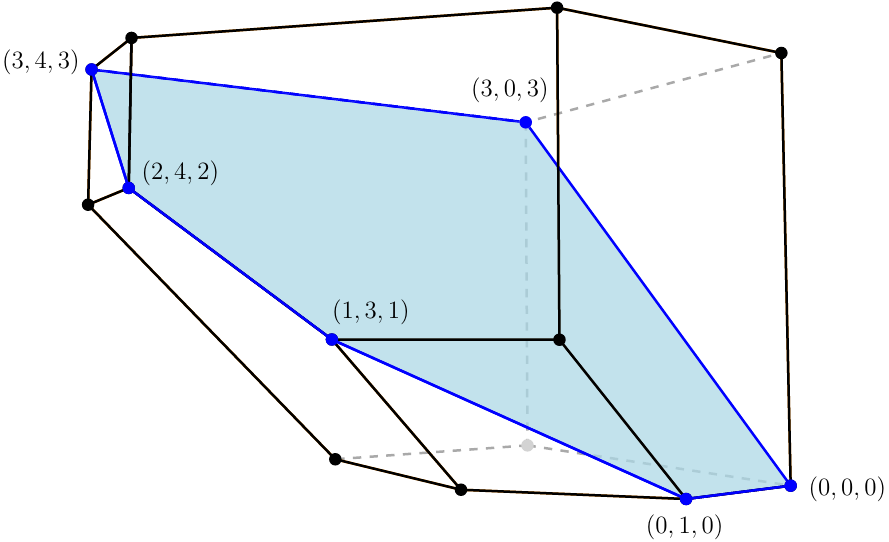}
\caption{Intersection plane $\Pi$ that yields the associahedron for $C_2$, see Example~\ref{ex-c2}.}
\label{fig:C2}
\end{figure}

\end{example}

We can now formulate our main result.

\begin{theorem} 
\label{main}  
Let  $\Delta'$ be a finite root system (possibly non-crystallographic) of rank $n'$, let $\Q$ be the a weighted acyclic quiver of type $\Delta'$.  Let $Q$ be the unfolding of $\Q$ (as listed in Table~\ref{weights}), denote by $n$ the rank of $Q$.  Consider the realization of the generalized  associahedron $\A_Q \subset \R^n$  for $Q$ constructed in~\cite{BCDMTY}. Let $\Pi\subset \R^n$ be the plane as in Definition~\ref{def_Pi}.

Then the section $\Pi\cap \A_Q$ is the generalized associahedron $\A_{Q'}$ for $\Q$.
Furthermore, the normal fan of $\A_{Q'}$ is precisely the $\gg$-vector fan of the quiver $Q'$.
\end{theorem}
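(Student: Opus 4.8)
The plan is to exploit the fact that the realization $\A_Q$ is cut out by the linear system~(\ref{x-eqs}) intersected with the positive orthant, and that the folding map $\p$ is compatible with mutations, so that symmetric seeds of $\mathcal A$ descend to seeds of the cluster algebra attached to $Q'$. First I would show that $\Pi$ is exactly the solution set of the ``symmetry'' conditions: on the affine span of $\A_Q$, the equations $w_jt_{ki}=w_it_{kj}$ for $i,j$ in one block are not independent — using~(\ref{x-eqs}) and the compatibility of the parameters $w_ic_{kj}=w_jc_{ki}$, imposing the symmetry on the first row propagates through the Auslander--Reiten structure (as in Example~\ref{ex-c2}) to all rows, so that $\Pi$ has dimension $n'$ and meets $\A_Q$ in a full-dimensional polytope. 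The weight-normalized coordinates $t_{[i]}:=w_i^{-1}t_{ki}$ (well-defined on $\Pi$, independent of the choice of $i$ in the block $[i]$ by Remark~\ref{rows}) give coordinates on $\Pi\cong\R^{n'}$.

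Next I would identify the linear system that $\Pi\cap\A_Q$ satisfies in these coordinates. Restricting~(\ref{x-eqs}) to $\Pi$ and summing, over each block, the equations indexed by $\tilde\alpha$ with $\p(\tilde\alpha)$ a positive multiple of $\p(\alpha)$, the left-hand sides collapse (by symmetry) to $w_{[i]}$-multiples of $t_{[\beta]}+t_{[\alpha]}$, the right-hand sides to the corresponding sum of $t_{[\gamma]}$ together with the folded constant $c_{[\alpha]}$. This is precisely the system~(\ref{x-eqs}) for the root system $\Delta'$ with its compatibility degree, because the folding map carries the compatibility degree on $\Delta_{\ge-1}$ to the compatibility degree on $\Delta'_{\ge-1}$ (the exchange combinatorics of $\mathcal A$ restricted to symmetric seeds is, by~\cite{FST2,DT1}, that of the $\Delta'$-cluster algebra). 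Since intersecting with the positive orthant of $\R^n$ restricts to intersecting with the positive orthant of $\Pi\cong\R^{n'}$, the theorem of~\cite{BCDMTY} applied to $Q'$ identifies $\Pi\cap\A_Q$ with $\A_{Q'}$.

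For the normal fan statement, I would argue that the normal fan of a section is the image of the normal fan under orthogonal projection onto the sectioning plane, provided the section is full-dimensional and each face of the section is the intersection of a face of the big polytope with the plane — which follows from the vertex/face correspondence established in the previous step, matching vertices of $\Pi\cap\A_Q$ to symmetric seeds and hence to seeds of $Q'$. Under the projection $\R^n\to\Pi$, the $\gg$-vector fan of $Q$ maps to the $\gg$-vector fan of $Q'$: this is exactly the compatibility of $\gg$-vectors with weighted unfoldings recalled in Section~\ref{unf} (the folding map sends $\gg$-vectors of $\mathcal A$ to $\gg$-vectors of the folded algebra, up to the weight $w_j$, and the orthogonal projection onto $\Pi$ coincides with $\p$ after the weight rescaling built into the coordinates $t_{[i]}$). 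Combining, the normal fan of $\A_{Q'}$ is the $\gg$-vector fan of $Q'$.

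The main obstacle I anticipate is the second step: verifying that the folded linear system is genuinely the $\Delta'$-version of~(\ref{x-eqs}) rather than some rescaled or reindexed variant. This requires care about (i) which Auslander--Reiten sequences in $\C_Q$ are grouped together by a block and why their sum has the shape of a single $\C_{Q'}$-relation, (ii) the non-crystallographic cases, where the weights $w_k$ are values of Chebyshev polynomials and a block may contain several rows with differing multiplicities, so the bookkeeping of which $t_{ki}$ collapse to a common $t_{[i]}$ is more delicate, and (iii) checking that the compatibility-degree identity $(\tilde\beta\,||\,\tilde\alpha)$ on $\Delta$ folds correctly — for this I would lean on the mutation-compatibility of unfoldings rather than a direct root-theoretic computation. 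The propagation argument in the first step (symmetry of one row forces symmetry of all rows) should be a short induction along $k$ using~(\ref{x-eqs}), generalizing Example~\ref{ex-c2}, and I do not expect difficulties there beyond writing it cleanly for all types in Table~\ref{weights}.
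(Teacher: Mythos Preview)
Your second step has a genuine gap: you propose to conclude $\Pi\cap\A_Q=\A_{Q'}$ by applying the main theorem of~\cite{BCDMTY} to $Q'$, but that theorem is stated and proved only for \emph{simply-laced} quivers, whereas $Q'$ is by hypothesis non-simply-laced (possibly non-crystallographic). Writing down the folded system in the form~(\ref{x-eqs}) for $\Delta'$ is plausible, but there is no result available saying that this system cut by the positive orthant gives the generalized associahedron of $\Delta'$; establishing exactly that is the content of the theorem you are trying to prove. The paper avoids this circularity by not trying to recognize the folded equations at all. Instead it shows (Propositions~\ref{normal-to-normal}--\ref{fans}) that the normal fan of the section $\Pi\cap\A_Q$ is the $\gg$-vector fan of $Q'$, and then invokes the Reading--Speyer/Yang--Zelevinsky characterization (\cite[Theorem~10.2]{RS}, \cite[Theorem~1.10]{YZ}): any polytope whose normal fan is the $\gg$-vector fan is a realization of $\A_{Q'}$. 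For the non-crystallographic types this last step needs the observation that the relevant $\gg$-vector mutation formula (Conjecture~7.12 of~\cite{FZ4}) still holds, which follows from the definitions in~\cite{DT1,DT2}.

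A second, smaller issue: your assertion that ``the normal fan of a section is the image of the normal fan under orthogonal projection onto the sectioning plane'' is not a general fact about polytopes, and the paper does not argue this way. What is true, and what Proposition~\ref{fans} proves, is that the normal fan of $\Pi\cap\A_Q$ equals the \emph{intersection} $\Pi\cap(\text{normal fan of }\A_Q)$; the projected $\gg$-vectors of $Q$ happen to land back inside the cones they came from (because within a block $\pi(v_{k_ij_i})$ is a non-negative combination of the $v_{k_ir_s}$), which is why the intersected fan is spanned by the projections. Your third paragraph would need this cone-by-cone verification rather than a blanket projection statement.
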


To prove the theorem, we will first show that $\A_Q\cap \Pi$ is indeed a polytope of required dimension with the correct number of facets (Proposition~\ref{intersections}), and then that the $\gg$-vectors for $Q'$ are precisely normal vectors to the facets of  $\A_Q\cap \Pi$ (Propositions~\ref{normal-to-normal}--\ref{fan}). In particular, $\gg$-vectors for $Q'$ are (rescaled) orthogonal projections of $\gg$-vectors for $Q$. Proposition~\ref{fans} then shows that both $\gg$-vector fan for $Q'$ and normal fan of $\Pi\cap \A$ coincide with the intersection of $\Pi$ with the normal fan of $\A$. In view of results of~\cite{RS,YZ} this implies the theorem.

%{\bf Questions:}\\
%- check: is there a compatibility degree for H in Fomin-Reading? (Yes: Thm 4.15 - and compatible with unfolding - check )\\
%- compatibility degree for g-vectors: DT, Section 8.2, compatible with unfolding\\
%- check it is the same compatibility in FR and DT\\
%- check it is the same compatibility as for Hom

\section{Proof of the main result}
\label{proof}

%\subsection{Properties of $\Pi\cap \A$}
We start with observing the following property of  $\Pi\cap \A_Q$.

\begin{prop}
  \label{intersections}
The plane $\Pi $ intersects every facet of $\A_Q$, and the intersection has dimension $(n'-1)$.

\end{prop}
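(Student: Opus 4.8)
The plan is to exploit the explicit description of $\A_Q$ coming from the equations~(\ref{x-eqs}): $\A_Q$ is cut out of the positive orthant $\{t_\alpha \ge 0\}$ by the affine subspace defined by the mesh relations, and its facets are precisely the intersections $\{t_\alpha = 0\} \cap \A_Q$, one for each $\alpha \in \Delta_{\ge -1}$ (equivalently, for each cluster variable). So the statement has two parts: (a) for every $\alpha$, the set $\Pi \cap \A_Q \cap \{t_\alpha = 0\}$ is nonempty and in fact has the ``expected'' codimension one inside $\Pi \cap \A_Q$; and (b) $\dim(\Pi \cap \A_Q) = n'$, so that each such facet slice has dimension $n'-1$. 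Part (b) is essentially the content of the Remark after Definition~\ref{def_Pi} (the symmetry conditions $w_i c_{kj} = w_j c_{ki}$ force $\Pi$ to meet the affine span of $\A_Q$ in an $n'$-dimensional affine subspace, and $\A_Q$ is full-dimensional in that span), so the real work is in~(a).

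First I would set up the ``symmetric'' picture carefully. A point of $\Pi$ satisfies $w_j t_{ki} = w_i t_{kj}$ for $i,j$ in a common block; combined with the mesh equations and the symmetry of the $c$'s, by Remark~\ref{rows} all coordinates $t_{kj}$ in a fixed row $j$ carry the same weight $w_j$, and the folding map $\p$ organizes $\Delta_{\ge -1}$ into $\p$-orbits. The key translation is: a symmetric point lying on $\{t_\alpha = 0\}$ automatically lies on $\{t_{\tilde\alpha} = 0\}$ for every $\tilde\alpha$ in the same $\p$-orbit as $\alpha$ (since $w_{\tilde\jmath} t_\alpha = w_j t_{\tilde\alpha}$ with all weights positive). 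Thus the facet slices of $\A_Q$ that $\Pi$ can possibly meet come in $\p$-orbits, and these orbits are exactly indexed by the cluster variables / almost positive roots of $\Delta'$, i.e.\ by the would-be facets of $\A_{Q'}$. This already gives the right \emph{count}. To show each such slice is actually nonempty, I would argue that $\A_Q$ has a vertex that is symmetric: take the symmetric seed (in the sense of the Notation section) whose cluster contains $x_\alpha$ — such a seed exists because composite mutations compatible with $P$ act transitively enough on symmetric seeds, and every cluster variable of $\mathcal A$ compatible with the folding occurs in some symmetric seed — and observe that the corresponding vertex of $\A_Q$ lies in $\Pi$ (its coordinates $t_\beta$ vanish exactly for $\beta$ in the cluster, and the vanishing/non-vanishing pattern is $\p$-orbit-invariant). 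That vertex lies on the facet $\{t_\alpha = 0\}$ and in $\Pi$, so $\Pi \cap \A_Q \cap \{t_\alpha = 0\} \ne \emptyset$.

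Then I would upgrade nonemptiness to the dimension statement. Since $\Pi \cap \A_Q$ is an $n'$-dimensional polytope (part (b)) and it is contained in the orthant, its facets are among the sets $\Pi \cap \A_Q \cap \{t_\alpha = 0\}$; I need each nonempty one to be a genuine facet, i.e.\ of dimension $n'-1$ and not lower-dimensional. For this I would use that $\A_{Q'}$, constructed via $\gg$-vectors (available by~\cite{DT1,DT2} even non-crystallographically), is a simple polytope with exactly $|\Delta'_{\ge -1}|$ facets, and produce a combinatorial isomorphism between the face poset of $\Pi \cap \A_Q$ and that of $\A_{Q'}$ at the level needed here: symmetric vertices of $\A_Q$ correspond to clusters of $\mathcal A'$, and two symmetric vertices are joined by an edge of $\Pi \cap \A_Q$ iff the corresponding $\Delta'$-clusters differ by a single (composite) mutation. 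Counting: each symmetric vertex lies on exactly $n'$ of the slices $\{t_\alpha=0\}$ (one per $\p$-orbit in its cluster), and each slice through it is cut out non-redundantly, forcing $\dim = n'-1$. Alternatively, and more cleanly, once Propositions~\ref{normal-to-normal}–\ref{fan} are in hand one knows the projected $\gg$-vectors for $Q'$ are the facet normals, and there are exactly $|\Delta'_{\ge-1}|$ of them, spanning a complete fan; here I will keep the argument self-contained by the vertex-counting above.

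The main obstacle I anticipate is precisely this last point: ruling out that $\Pi$ meets some facet $\{t_\alpha = 0\}$ only in a face of dimension $< n'-1$ (a ``tangential'' intersection). Dimension-counting alone does not preclude this, so the argument must genuinely use that $\A_Q$ is a \emph{simple} polytope whose normal fan is the $\gg$-vector fan and that $\Pi$ is cut out by equations \emph{adapted to the folding} — concretely, that the linear functional $t_\alpha$ restricted to $\Pi$ is not constant (else $\{t_\alpha=0\}\cap\Pi$ could be all of $\Pi$ or empty) and that the symmetric vertices of $\A_Q$, which lie in $\Pi$, realize all the $\binom{n'}{n'-1}$-incidences expected in $\A_{Q'}$. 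I would handle this by checking that the symmetric vertices are in bijection with $\Delta'$-clusters (using compatibility of folding with mutation, \cite{FST2,DT1}) and that distinct symmetric vertices have distinct zero-patterns, so the slices are distinct facets; transversality of $\Pi$ to each facet hyperplane then follows from the normal fan of $\A_Q$ being $\gg$-vector fan together with the fact, to be proved in Proposition~\ref{normal-to-normal}, that projecting a $\gg$-vector of $Q$ along $\Pi^\perp$ gives a nonzero vector.
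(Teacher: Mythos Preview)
Your nonemptiness argument (find a symmetric seed containing $x_\alpha$, observe that the corresponding vertex of $\A_Q$ lies in $\Pi$ and on the facet $\{t_\alpha=0\}$) is exactly what the paper does. Where you diverge is in the dimension statement, and there the paper is considerably more direct than your plan.

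The paper does not argue abstractly that ``some'' symmetric seed contains $x_{ki}$; it writes one down. For a facet $\{t_{ki}=0\}$ with $k$ not the last column (or $h$ even), it takes the full horizontal slice $\{x_{kj}\mid j=1,\dots,n\}$ of the Auslander--Reiten quiver as the symmetric seed. The point of this specific choice is that the values $\{t_{kj}\}$ are free coordinates on the affine span of $\A_Q$, and on $\Pi$ the weight relations $w_j t_{ki}=w_i t_{kj}$ reduce these to exactly $n'$ independent coordinates, one per block. Setting $t_{ki}=0$ then visibly cuts this down to an $(n'-1)$-dimensional affine plane inside $\Pi$, and near the symmetric vertex (where all $t_{kj}=0$) this plane lies in $\A_Q$. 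For the exceptional case $h$ odd, $k=(h+3)/2$, the paper patches the half-slice $\{x_{kj}\}$ with the complementary half of the previous slice $\{x_{k-1,l}\}$ to get a symmetric seed and runs the same argument.

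Your worry about ``tangential'' intersections is legitimate in general, but the horizontal-slice coordinate system dissolves it: once $\{t_{k[j]}\}$ are literal coordinates on $\Pi\cap\mathrm{aff}(\A_Q)$, the hyperplane $\{t_{ki}=0\}$ is manifestly transverse. Your proposed cures --- a combinatorial isomorphism of face posets with $\A_{Q'}$, or invoking Propositions~\ref{normal-to-normal}--\ref{fan} --- are both problematic here: the first is essentially the main theorem and would be circular, and the second reverses the paper's logical order (Proposition~\ref{intersections} is used to establish that $\Pi\cap\A_Q$ is an $n'$-polytope before its normal fan is analyzed). So while your outline is not wrong, it replaces a two-line local computation with machinery that is either unavailable at this point or is what you are ultimately trying to prove.
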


\begin{proof}
  Every facet of $\A_Q$ is of the form $t_{ki}=0$ for some $k,i$. Assume first that either the Coxeter number $h$ of $\Delta$ is even, or $h$ is odd but $k$ is not equal to its maximal possible value, i.e. $k\ne (h+3)/2$. Then we can consider the seed containing all variables  $\{x_{kj}\mid j=1,\dots,n\}$. The coordinates on the plane are given by all values of $t_{kj}$ with $j\ne i$.  The obtained seed  is clearly symmetric, so the corresponding vertex of $\A_Q$ (which is defined by $\{t_{kj}=0\}$ for all $j=1,\dots,n$) belongs to $\Pi$. The coordinates on the intersection $\Pi\cap\A_Q$ are given by the values $\{t_{kj}\}$, where we take any one index $j$ from every block, there are precisely $n'$ blocks available. Setting  $t_{ki}=0$ we are left with $(n'-1)$-dimensional plane in $\Pi\cap\A_Q$. 

Otherwise, i.e. if $h$ is odd and $k=(h+3)/2$, we choose a symmetric seed containing $x_{ki}$ as follows. There are $n/2$ variables only with first index $k$, so we take all existing $\{x_{kj}\}$, and complement them with all $\{x_{k-1,l}\}$ such that no $x_{k,l}$ exists. We get a symmetric seed, so the further reasoning is similar to the previous case.

\end{proof}

Proposition~\ref{intersections} implies that $\Pi\cap\A_Q$ is an $n'$-dimensional polytope.

\begin{example}
  Going back to Example~\ref{ex-c2}, $t_{ki}$ in $A_3$ correspond to diagonals of a regular hexagon, seeds are given by triangulations, while symmetric seeds are precisely triangulations symmetric with respect to the center of the hexagon. Every diagonal can be included in a symmetric triangulation, the coordinate on any of the other diagonals of the triangulation parametrizes the corresponding facet of $\A_Q\cap\Pi$.

%  For surfaces: easy to check from a surface that every variable $t_i$ is a part of a symmetric cluster  (i.e. every arc is a part of a symmetric triangulation).

\end{example}

Denote by $\pi$ the orthogonal projection $\R^n\to\Pi$. We now want to explore the normal fan of $\A_Q\cap \Pi$. We start with the following elementary observation.

\begin{prop}
  \label{normal-to-normal}
  Let $f$ be a facet of $\A_Q\cap \Pi$, that is $f=\Pi\cap \tilde f$, where $\tilde f$ is a facet of $\A_Q$. Let $\tilde v$ be an outer  normal vector to $\tilde f$. Then $v=\pi(\tilde v)$ is an outer normal vector to $f$ in $\Pi$. 
 
\end{prop}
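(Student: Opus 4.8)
The plan is to reduce the statement to a general fact about orthogonal projections of normal vectors: if $\tilde f$ is a facet of a polytope $P\subset\R^n$ lying in a supporting hyperplane $H = \{x : \langle \tilde v, x\rangle = a\}$ with $\tilde v$ an outer normal, and $\Pi\subset\R^n$ is an affine subspace meeting the interior of $P$ (so that $\Pi\cap P$ is a polytope of dimension $\dim\Pi$ and $f = \Pi\cap\tilde f$ is a facet of it), then $\pi(\tilde v)$ — the orthogonal projection of $\tilde v$ onto the direction space of $\Pi$ — is an outer normal to $f$ inside $\Pi$. I would first note that by Proposition~\ref{intersections} the hypotheses of this general fact are satisfied: $\Pi$ meets every facet of $\A_Q$ in a set of dimension $n'-1$, so $f=\Pi\cap\tilde f$ is genuinely a facet of the $n'$-dimensional polytope $\Pi\cap\A_Q$.

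Next I would set up coordinates compatibly with $\Pi$. Write $\R^n = \Pi_0 \oplus \Pi_0^\perp$ where $\Pi_0$ is the linear subspace parallel to $\Pi$, and let $\pi$ denote orthogonal projection onto $\Pi_0$. For a point $x\in\Pi$ and the outer normal $\tilde v$ to $\tilde f$, decompose $\tilde v = \pi(\tilde v) + \tilde v^\perp$ with $\tilde v^\perp\in\Pi_0^\perp$. The key computation is that for any two points $x, y\in\Pi$, the difference $x - y$ lies in $\Pi_0$, hence is orthogonal to $\tilde v^\perp$, so $\langle \tilde v, x - y\rangle = \langle \pi(\tilde v), x - y\rangle$. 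This shows that the linear functional $\langle\tilde v, \cdot\rangle$ and $\langle\pi(\tilde v),\cdot\rangle$ differ by a constant on $\Pi$; since $\tilde v$ is the outer normal to $\tilde f$, the functional $\langle\tilde v,\cdot\rangle$ is maximized over $\A_Q$ (a fortiori over $\Pi\cap\A_Q$) exactly on $\tilde f$, hence over $\Pi\cap\A_Q$ exactly on $f = \Pi\cap\tilde f$. Therefore $\langle\pi(\tilde v),\cdot\rangle$ is maximized over $\Pi\cap\A_Q$ exactly on $f$, which is precisely the statement that $\pi(\tilde v)$ is an outer normal vector to $f$ in $\Pi$.

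One point that needs a word of care is that $\pi(\tilde v)$ should be nonzero, since a normal vector is required to be nonzero; but if $\pi(\tilde v)=0$ then $\langle\tilde v,\cdot\rangle$ is constant on $\Pi$, so it would be maximized on all of $\Pi\cap\A_Q$ rather than on a proper facet, contradicting that $f$ is a facet (equivalently, contradicting $\dim f = n'-1 < n' = \dim(\Pi\cap\A_Q)$ from Proposition~\ref{intersections}). So $\pi(\tilde v)\ne 0$ automatically.

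I do not anticipate a serious obstacle here — this is the "elementary observation" the authors advertise. The only mild subtlety is bookkeeping about whether one works with the full normal fan in $\R^n$ or inside the affine hull of $\Pi$; phrasing everything in terms of "the linear functional is maximized precisely on $f$" avoids any need to distinguish inner/outer conventions beyond the single sign already fixed by calling $\tilde v$ an \emph{outer} normal. The argument uses only that $\Pi\cap\A_Q$ has full dimension $n'$ in $\Pi$ and that $f=\Pi\cap\tilde f$ is a facet, both supplied by Proposition~\ref{intersections}.
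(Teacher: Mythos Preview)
Your argument is correct and follows the same idea as the paper: decompose $\tilde v=\pi(\tilde v)+w$ with $w\in\Pi^\perp$ and observe that $\langle w,\cdot\rangle$ vanishes on directions tangent to $\Pi$, so $\pi(\tilde v)$ is normal to $f$ in $\Pi$. Your write-up is in fact more complete than the paper's two-line computation, since you explicitly verify that $\pi(\tilde v)$ is \emph{outer} (via the maximization argument) and that it is nonzero.
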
  

\begin{proof}
  Let $v=\pi(v)+w$ where $w\in \Pi^\perp$, then for any $u\in f$ we have
$$ \langle \pi(\tilde v),u \rangle=\langle v,u\rangle -\langle w,u\rangle=0.  
$$
\end{proof}

Let $\{e_i\}$ be the basis of $\gg$-vectors for  $Q$ (i.e. the $\gg$-vectors of the initial seed given by $Q$).
%Consider the basis of $\Pi$ given by $\{ e_{[i]}=\pi(e_i) \mid w_i=1  \}$.

The following proposition can be checked by a straightforward computation.
\begin{prop}
  \label{formula}
\begin{itemize}
\item[(a)]
  Let $i,j$ belong to one block. Then $w_j\pi(e_i)=w_i\pi(e_j)$. In particular, the vectors $\{ e_{[i]}=\pi(e_i)/w_i\}$, where we take one $i$ from every block, form a basis of $\Pi$.
\item[(b)]
  Let  $\pi: \R^n\to \Pi$  take a vector $\lambda=(\lambda_1,\dots,\lambda_n)=\sum \lambda_i e_i\in \R^n$  to the vector $ \lambda'=(\lambda_{[1]},\dots,\lambda_{[n']})=\sum \lambda_{[i]}e_{[i]}\in \Pi$. Then 
%\begin{equation}
 
$$  \lambda'_{[i]}=\sum\limits_{j:\ P(j)=[i]} w_j \lambda_j.$$
%  \end{equation}

\end{itemize}
  
\end{prop}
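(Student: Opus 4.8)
The statement to be proved is Proposition~\ref{formula}, which has two parts: (a) that the orthogonal projection $\pi$ identifies the $\gg$-vectors $e_i$ of $Q$ within a block up to the block weights, so that the rescaled images $e_{[i]}=\pi(e_i)/w_i$ form a basis of $\Pi$; and (b) the explicit formula $\lambda'_{[i]}=\sum_{j:P(j)=[i]}w_j\lambda_j$ for the coordinates of the projection of a vector $\lambda$ expressed in the $e_i$-basis. The plan is to extract both statements from the definition of $\Pi$ (Definition~\ref{def_Pi}) together with the weighted-unfolding compatibility of $\gg$-vectors recalled in Section~\ref{unf}.

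\textbf{Step 1: Reconcile the two coordinate systems on $\R^n$.} The hyperplanes cutting out $\Pi$ are phrased in Definition~\ref{def_Pi} in the $t_{ki}$-coordinates, i.e. in the coordinate system dual to the standard basis of $\R^n$ indexed by almost positive roots. On the other hand $\{e_i\}$ is the basis of $\gg$-vectors of the initial seed given by $Q$, living in the $\gg$-vector space. By~\cite{BCDMTY} (recalled after~(\ref{x-eqs}) and in Section~\ref{g}), the $\gg$-vector space is exactly the linear space of solutions of~(\ref{x-eqs}) with all $c=0$, sitting inside the same $\R^n$; so I would first fix once and for all the identification under which $\Pi$ is a linear subspace of the $\gg$-vector space. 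The condition $w_jt_{ki}=w_it_{kj}$ for $i,j$ in one block says precisely that $\Pi$ is the common kernel of the linear functionals $t_{ki}/w_i-t_{kj}/w_j$; dually, $\Pi^\perp$ is spanned by the vectors $e_i/w_i-e_j/w_j$ for $i,j$ in a block (here I use that the defining hyperplanes are pairwise ``orthogonal enough'' — more precisely that $\Pi$ has the expected dimension $n'$, which is the content of the Remark following Definition~\ref{def_Pi}, and that the standard basis is orthonormal so that the normal to $w_jt_{ki}=w_it_{kj}$ is $e_i/w_i-e_j/w_j$ up to scale).

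\textbf{Step 2: Prove (a).} With $\Pi^\perp=\operatorname{span}\{e_i/w_i-e_j/w_j:\ P(i)=P(j)\}$ in hand, orthogonal projection kills exactly these differences, so $\pi(e_i)/w_i=\pi(e_j)/w_j$ whenever $i,j$ lie in one block; this is the first assertion, and multiplying through by $w_iw_j$ gives $w_j\pi(e_i)=w_i\pi(e_j)$. For the basis claim: $\pi$ is surjective onto $\Pi$ and the $e_i$ span $\R^n$, so the $\pi(e_i)$ span $\Pi$; since $\pi(e_i)$ and $\pi(e_j)$ are proportional within a block, picking one representative $i$ per block still spans $\Pi$; and $n'$ such vectors spanning the $n'$-dimensional space $\Pi$ must be a basis. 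Rescaling to $e_{[i]}=\pi(e_i)/w_i$ changes nothing.

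\textbf{Step 3: Prove (b).} Write $\lambda=\sum_j\lambda_je_j$ and apply $\pi$: $\pi(\lambda)=\sum_j\lambda_j\pi(e_j)$. Group the sum by blocks; within the block $P^{-1}([i])$ use $\pi(e_j)=w_je_{[i]}$ (which is just $\pi(e_j)/w_j=\pi(e_i)/w_i=e_{[i]}$ from Step 2) to get $\sum_{j:P(j)=[i]}\lambda_j\pi(e_j)=\big(\sum_{j:P(j)=[i]}w_j\lambda_j\big)e_{[i]}$. Summing over blocks yields $\pi(\lambda)=\sum_{[i]}\big(\sum_{j:P(j)=[i]}w_j\lambda_j\big)e_{[i]}$, so the coordinate of $\lambda'$ in the $e_{[i]}$-direction is $\lambda'_{[i]}=\sum_{j:P(j)=[i]}w_j\lambda_j$, as claimed. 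The only point to be careful about is that reading off the coordinate this way is legitimate precisely because $\{e_{[i]}\}$ is a basis, which is Step 2.

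\textbf{Main obstacle.} The genuine content — and the step most likely to need care rather than routine symbol-pushing — is Step 1: pinning down that the normal direction to the hyperplane $\{w_jt_{ki}=w_it_{kj}\}$ is $e_i/w_i-e_j/w_j$ and that these differences, as $(i,j)$ ranges over intra-block pairs, span a space of the full expected dimension $n-n'$ so that $\Pi^\perp$ is exactly their span. This is where the hypothesis $w_ic_{kj}=w_jc_{ki}$ and the dimension remark preceding the statement do their work; once the description of $\Pi^\perp$ is secured, parts (a) and (b) are immediate linear algebra, consistent with the paper's remark that the proposition ``can be checked by a straightforward computation.''
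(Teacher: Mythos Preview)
Your argument is correct and is exactly the kind of straightforward computation the paper alludes to (the paper gives no proof beyond that phrase). The only place worth tightening is Step~1: the normal to $w_jt_{ki}=w_it_{kj}$ is literally $w_je_i-w_ie_j$ only for the initial value of $k$ (where $t_{1i}$ is the coordinate dual to $e_i$); you should say explicitly that the $n-n'$ equations with initial $k$ are already independent, so by the dimension Remark they alone cut out $\Pi$, whence $\Pi^\perp=\operatorname{span}\{w_je_i-w_ie_j:P(i)=P(j)\}$ and Steps~2--3 go through verbatim.
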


Proposition~\ref{formula} gives rise to the following notation: Given a $\gg$-vector $g_{ki}$ corresponding to variable $x_{ki}$, we denote $\pi_w(g_{ki})=\pi(g_{ki})/w_i$. 

Denote by  $\mathcal G$ (resp. $\mathcal G'$) the set of $\gg$-vectors for $Q$ (resp, $Q'$). 
Next, we prove that $\pi_w(\mathcal G)=\mathcal G'$.

%The following proposition is an immediate corollary of Proposition~\ref{formula}.

\begin{prop}
\label{fan}  
The set of scaled orthogonal projections $\pi_w(\mathcal G)$ to $\Pi$ of $\gg$-vectors for $Q$ coincides with the set of $\gg$-vectors for $\Q$ (where $\gg$-vectors for $Q$ and $\Q$ are written in the bases described above). 

\end{prop}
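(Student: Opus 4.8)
The plan is to use the compatibility of weighted unfoldings with $\gg$-vectors established in~\cite{DT1,DT2}: for a finite-type non-crystallographic (or non-simply-laced) quiver $Q'$ with unfolding $Q$, the $\gg$-vector of a cluster variable of $\mathcal A'$ is obtained from the $\gg$-vectors of the corresponding variables of $\mathcal A$ by the same linear folding map, up to rescaling by the appropriate weight. Concretely, I would argue as follows. First I would recall that the folding map $\p$ sends the simply-laced root system $\Delta$ onto $\bigcup_k w_k\Delta'$, and that under $P$ the cluster variables of $\mathcal A$ lying over a single variable of $\mathcal A'$ form exactly a symmetric collection $\{x_{ki} : P(i)=[i]\}$ with a common ``orbit'' $\gg$-vector datum. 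By Proposition~\ref{formula}(a), for $i,j$ in one block we have $w_j\pi(e_i)=w_i\pi(e_j)$, so the scaled projections $e_{[i]}=\pi(e_i)/w_i$ are well defined and form a basis of $\Pi$; this is the target basis in which $\gg$-vectors for $Q'$ are expressed.

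Next I would compare the two sets. On one side, $\gg$-vectors for $Q$ satisfy the homogeneous version of the defining equations~(\ref{x-eqs}) (the equations with all $c_{ki}=0$), by~\cite{BCDMTY}; passing to a symmetric seed and applying $\pi_w$ coordinate-wise via the formula in Proposition~\ref{formula}(b), the projected vectors satisfy the homogeneous equations for $Q'$ obtained by the same folding of the mesh/exchange relations. On the other side, the $\gg$-vectors for $Q'$ are characterised (via Nakanishi--Zelevinsky tropical duality~\cite{NZ}, as in~\cite{DT1,DT2}) by the same system together with the sign-coherence/chamber conditions, and the compatibility of unfolding with mutations~\cite{FST2,Du,DT1} shows that a composite (symmetric) mutation sequence on $Q$ projecting down to a mutation sequence on $Q'$ carries the basis $\{e_i\}$ to the $\gg$-vectors of the mutated symmetric seed, which project (after rescaling by $w_i$) exactly to the $\gg$-vectors of the corresponding seed of $Q'$. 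Since every seed of $\mathcal A'$ arises from a symmetric seed of $\mathcal A$ in this way, every $\gg$-vector for $Q'$ is of the form $\pi_w(g)$ for some $g\in\mathcal G$; conversely every $g_{ki}\in\mathcal G$ sits in some symmetric seed (as used already in the proof of Proposition~\ref{intersections}), and its scaled projection is a $\gg$-vector of the corresponding seed of $Q'$ by Remark~\ref{rows}, which guarantees that all variables in a row share one weight so the rescaling is consistent. Hence $\pi_w(\mathcal G)=\mathcal G'$.

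I would organise the verification in two inclusions: $\pi_w(\mathcal G)\subseteq \mathcal G'$ using that a symmetric seed of $\mathcal A$ projects to a seed of $\mathcal A'$ and the $\gg$-vector of each variable in the block rescales to the $\gg$-vector of the image variable; and $\mathcal G'\subseteq \pi_w(\mathcal G)$ using that every variable of $\mathcal A'$ lifts to a symmetric collection in $\mathcal A$. Throughout, the key input is purely the already-cited fact that unfolding commutes with mutation and with the $\gg$-vector assignment up to weights~\cite{DT1,DT2}, together with the explicit projection formula of Proposition~\ref{formula}; no new root-system computation is needed beyond checking the base case (the initial seed, where $e_{[i]}=\pi(e_i)/w_i$ by definition) and then propagating by mutation.

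\textbf{Main obstacle.} The delicate point is to make precise the statement ``the $\gg$-vectors of a symmetric seed of $\mathcal A$ project to the $\gg$-vectors of the corresponding seed of $\mathcal A'$ with a uniform rescaling per block'': one must check that the weight appearing in the rescaling depends only on the block (i.e.\ on the row index, by Remark~\ref{rows}), not on which mutation sequence or which variable in the block one picks, and that the composite mutation on $Q$ really does send the $\gg$-vector basis to the expected vectors when restricted to $\Pi$. This is exactly where~\cite{DT1,DT2} are invoked, so the work is in quoting those results in the right form and matching their normalisation of weights with the constants $c_{ki}$ normalisation $w_ic_{kj}=w_jc_{ki}$ used here; once that bookkeeping is done, the proposition follows by the two-inclusion argument above.
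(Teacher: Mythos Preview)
Your proposal is correct and follows essentially the same route as the paper: combine the explicit projection formula of Proposition~\ref{formula} with the known compatibility of weighted unfolding with $\gg$-vectors (trivial from the definition in the crystallographic case, and quoted from~\cite{DT1,DT2} otherwise). The paper's proof is in fact just this two-line citation; your write-up unpacks the mutation-propagation argument underlying those cited results, but the substance is the same.
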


\begin{proof}
  Indeed, in the  crystallographic case this follows immediately from  Proposition~\ref{formula} and the definition of $\gg$-vestors.
  For the non-crystallographic case combine  Proposition~\ref{formula} with Theorem~8.4 in~\cite{DT1} and Theorem~6.15 in~\cite{DT2}

\end{proof}

%Proposition~\ref{fan} shows Theorem~\ref{main}   for the crystallographic case.

%For non-crystallographic, we still need to show that $g$-vector fan gives an associahedron. This will be done in Section~\ref{sec-comp}

Proposition~\ref{fan} combined with Proposition~\ref{normal-to-normal} shows that the $\gg$-vectors for $Q'$ are precisely normal vectors to the facets of $\Pi\cap \A_Q$. Next, we will show that the normal fan of  $\Pi\cap \A_Q$ coincides with the $\gg$-vector fan for $Q'$. This is proved in the next proposition.

\begin{prop}
\label{fans}
  Both $\gg$-vector fan for $Q'$ and normal fan of $\Pi\cap \A_Q$ coincide with the intersection of $\Pi$ with the normal fan of $\A_Q$.

  \end{prop}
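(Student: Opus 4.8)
The plan is to establish the three claimed coincidences by a chain of inclusions, using the fact that $\Pi$ meets every facet of $\A_Q$ transversally (Proposition~\ref{intersections}) and that scaled orthogonal projection sends $\gg$-vectors for $Q$ to $\gg$-vectors for $Q'$ (Propositions~\ref{formula} and~\ref{fan}). Concretely, let $\mathcal N$ denote the normal fan of $\A_Q$ in $\R^n$, let $\mathcal N'$ denote the normal fan of $\Pi\cap\A_Q$ inside $\Pi$, and let $\mathcal F'$ denote the $\gg$-vector fan for $Q'$. I must show $\mathcal N'=\mathcal F'=\pi(\mathcal N)$, where $\pi:\R^n\to\Pi$ is the orthogonal projection. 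The key structural input is that $\mathcal N$ is the $\gg$-vector fan for $Q$ (recalled in Section~\ref{g}, from~\cite{BCDMTY}), and that for a complete simplicial fan the maximal cones are determined by their rays; thus it suffices to match up rays and adjacency (clusters/seeds).

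\textbf{Step 1: $\pi(\mathcal N)$ is a fan refining $\mathcal F'$, and in fact equals it.} By Proposition~\ref{fan}, the rays of $\mathcal N$ (the $\gg$-vectors $g_{ki}$ for $Q$) are sent by $\pi_w$ exactly onto the rays of $\mathcal F'$ (the $\gg$-vectors for $Q'$), and by Proposition~\ref{formula}(a) the projection respects the block structure, so that a symmetric seed of $\mathcal A$ — a maximal cone of $\mathcal N$ spanned by a symmetric collection of $\gg$-vectors — projects onto the cone spanned by the corresponding $\gg$-vectors for $Q'$, i.e.\ onto a maximal cone of $\mathcal F'$. Since composite mutations compatible with $P$ correspond exactly to mutations of $Q'$ (weighted unfoldings are compatible with mutations, as recalled in Section~\ref{unf}), adjacency of symmetric seeds matches adjacency of $Q'$-seeds. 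One checks that the non-symmetric maximal cones of $\mathcal N$ also project into (unions of) cones of $\mathcal F'$ — a face of a non-symmetric cone lies in $\Pi$ only if it is spanned by $\gg$-vectors whose $\pi_w$-images form a face of $\mathcal F'$, by Proposition~\ref{formula}(b). Hence $\pi(\mathcal N)$, restricted to $\Pi$, is precisely the fan $\mathcal F'$: it has the right rays, the right maximal cones, and covers $\Pi$ because $\mathcal N$ covers $\R^n$.

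\textbf{Step 2: $\mathcal N'=\pi(\mathcal N)|_\Pi$.} This is where Proposition~\ref{intersections} does the work. Every facet $f$ of $\Pi\cap\A_Q$ is of the form $\Pi\cap\tilde f$ for a facet $\tilde f$ of $\A_Q$, and by Proposition~\ref{normal-to-normal} its outer normal in $\Pi$ is $\pi(\tilde v)$ where $\tilde v$ is the outer normal to $\tilde f$; conversely, since $\Pi$ meets \emph{every} facet of $\A_Q$ (Proposition~\ref{intersections}), every facet of $\A_Q$ contributes a facet of the section, so the facet normals of $\Pi\cap\A_Q$ are exactly $\{\pi(\tilde v)\}$ over all facets $\tilde f$ of $\A_Q$. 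More generally, for a section of a polytope by a subspace meeting the relative interior, the normal fan of the section is the image under $\pi$ of the normal fan of the polytope (the face of $\Pi\cap\A_Q$ minimizing a linear functional $\langle v,\cdot\rangle$, $v\in\Pi$, is $\Pi$ intersected with the face of $\A_Q$ minimizing $\langle v,\cdot\rangle$, because $\langle v,\cdot\rangle$ on $\Pi$ agrees with $\langle v,\cdot\rangle$ on $\R^n$ for $v\in\Pi$). Combining with Step~1 gives $\mathcal N'=\pi(\mathcal N)|_\Pi=\mathcal F'$.

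\textbf{Main obstacle.} The delicate point is Step~1: controlling the images of the \emph{non-symmetric} maximal cones of $\mathcal N$ and checking that $\pi(\mathcal N)$ genuinely forms a fan on $\Pi$ (no overlapping cone interiors) rather than just a cone cover. The cleanest route is to avoid analyzing non-symmetric cones directly: argue that $\pi(\mathcal N)$ has the same support as $\Pi$ (all of it), has rays exactly the $\gg$-vectors for $Q'$ by Proposition~\ref{fan}, and has maximal cones the projections of symmetric maximal cones of $\mathcal N$, which form a complete simplicial fan combinatorially isomorphic to the cluster complex of $Q'$ via the correspondence ``symmetric seeds $\leftrightarrow$ seeds of $Q'$'' — and then invoke that a complete simplicial fan is determined by its set of maximal cones, so it must coincide with $\mathcal F'$. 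The non-symmetric cones then automatically refine into these, so they cause no trouble. Finally, the last sentence of the theorem statement (normal fan $=$ orthogonal projection of the normal fan of $\A_Q$) is exactly the identity $\mathcal N'=\pi(\mathcal N)$ proved in Step~2.
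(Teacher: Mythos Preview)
Your proposal has a genuine gap, and it lies in Step~2. You assert as a general fact that ``for a section of a polytope by a subspace meeting the relative interior, the normal fan of the section is the image under $\pi$ of the normal fan of the polytope,'' and you justify this with the claim that for $v\in\Pi$, the face of $\Pi\cap\A_Q$ minimizing $\langle v,\cdot\rangle$ equals $\Pi\cap F_v(\A_Q)$. This is false in general: if the face $F_v(\A_Q)$ happens not to meet $\Pi$, the right-hand side is empty while the left-hand side is not. Concretely, take $P=[-1,1]^3$ and $\Pi=\{x+2y+4z=0\}$; then $P\cap\Pi$ is a quadrilateral (normal fan with four maximal cones), whereas $\mathcal N(P)\cap\Pi$ has six maximal two-dimensional cones coming from the six mixed-sign octants. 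So neither ``projection of $\mathcal N$'' nor ``intersection of $\mathcal N$ with $\Pi$'' automatically yields the normal fan of the section. The statement of the proposition genuinely depends on the particular choice of $\Pi$ dictated by the block structure.

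This is exactly what the paper's proof supplies and what your outline lacks. The paper takes a face $f$ of $\A_Q$ meeting $\Pi$, with normal cone $F$ generated by $\gg$-vectors $v_{k_1j_1},\dots,v_{k_qj_q}$, and shows (using the block structure) that each projection $\pi(v_{k_ij_i})$ is a non-negative combination of the other generators of $F$ lying in the same block, hence $\pi(v_{k_ij_i})\in\Pi\cap F$. That is the crucial lemma: it forces the normal cone of $f\cap\Pi$ (which by Proposition~\ref{normal-to-normal} is spanned by the $\pi(v_{k_ij_i})$) to sit inside $\Pi\cap F$, and completeness of both fans then gives equality. Your Step~1 gestures at the block structure via symmetric seeds, but you never establish this containment $\pi(v_{k_ij_i})\in F$; the hand-waving about non-symmetric cones in your ``Main obstacle'' paragraph does not substitute for it. Relatedly, your repeated use of ``$\pi(\mathcal N)$'' as though it were a well-defined fan is problematic: projecting a complete fan does not in general yield a fan (cone images overlap), so the object you are comparing to $\mathcal F'$ and $\mathcal N'$ is not defined without further argument.
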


  \begin{proof}
    First, we prove that the intersection of $\Pi$ with the normal fan of $\A_Q$ coincides with the normal fan of $\Pi\cap \A_Q$. For this, it is sufficient to prove that given any face $f$ of $\A_Q$ intersecting $\Pi$, the intersection of its normal cone $F$ with $\Pi$  is the normal cone of the face $\Pi\cap f$ of $\Pi\cap \A_Q$. 

   Let $f$ be an $(n-q)$-dimensional face of $\A_Q$ intersecting $\Pi$, and let $F$ be the corresponding face of the normal fan of $\A_Q$. Suppose that $F$ is spanned by $\gg$-vectors  $v_{k_1j_1},\dots,v_{k_qj_q}$ of cluster variables $x_{k_1j_1},\dots,x_{k_qj_q}$ respectively. Choose any $i\in\{1,\dots,q\}$, and consider first the case when $x_{k_ij_i}$ is contained in a block of size one. Then $t_{k_ij_i}$ does not appear in the equations of $\Pi$, and thus the facet of $\A_Q$ corresponding to $t_{k_ij_i}$ is orthogonal to $\Pi$, which implies that $v_{k_ij_i}\in \Pi$, so $\pi(v_{k_ij_i})=v_{k_ij_i}\in\Pi\cap F$.     

   Now suppose that $x_{k_ij_i}$ is contained in a block with variables $x_{k_ir_2},\dots, x_{k_ir_l}$  (denote also $r_1=j_i$). Note that if not all $v_{k_ir_1},\dots, v_{k_ir_l}$ belong to $F$, then $\Pi$ does not intersect the interior of $F$, and neither it intersects the interior of $f$, so $f\cap \Pi=f'\cap \Pi$ for some proper face $f'$ of $f$. Therefore, without loss of generality we may assume that all $t_{k_ir_s}$ are contained in the list $t_{k_1j_1},\dots,t_{k_qj_q}$.    
   By Propositions~\ref{formula} and~\ref{fan}, $\pi(v_{k_ij_i})$ is a non-negative linear combination of vectors $v_{k_ir_s}$ corresponding to indeterminates $t_{k_ir_s}$, and thus  $\pi(v_{k_ij_i})\in F$.
   
Therefore, for all generating rays $v_{k_ij_i}$ of $F$  we have $\pi(v_{k_ij_i})\in F$, so $\pi(v_{k_ij_i})\in\Pi\cap F$.

Observe that $\pi(f)=(\bigcap\limits_{i=1}^q f_i)\cap\Pi=\bigcap\limits_{i=1}^q (f_i\cap\Pi)$, where $f_i$ denotes the facet of $\A_Q$ orthogonal to $v_{k_ij_i}$. By Prop.~\ref{fan}, $\pi(v_{k_ij_i})$ is a normal vector to $(f_i\cap\Pi)$, which implies that the normal cone to $\pi(f)$ is spanned by $\pi(v_{k_ij_i})$. Therefore, the normal cone to $\pi(f)$ is contained in $\Pi\cap F$. As this holds for every face of $\Pi\cap \A_Q$, this implies that the two fans coincide.
\medskip

Now, we prove that the intersection of $\Pi$ with the normal fan of $\A_Q$ coincides with the $\gg$-vector fan for $\Q$. As we have proved above, the intersection fan is spanned by $\gg$-vectors of $Q'$. 

Take any maximal cone $K$ in the intersection fan, it is an intersection of $\Pi$ with a maximal cone $\tilde K$ of the normal fan of $\A_Q$, denote the generating $\gg$-vectors of $\tilde K$ by $v_1,\dots,v_r$. Then $K$ is spanned by non-negative linear combinations of $\{v_i\}$. The only $\gg$-vectors of $Q'$ which are non-negative linear combinations of $\{v_i\}$ are projections $\pi_w(v_i)$ (see Propositions~\ref{formula},~\ref{fan}), so $K$ is a cone of the $\gg$-vector fan of $Q'$. The statement now follows since all the fans in question are complete. 

\end{proof}

\begin{remark}
It was pointed out to us by Nathan Reading that the fact that  the $\gg$-vector fan for $Q'$ coincides with the intersection of $\Pi$ with the $\gg$-vector fan for $Q$ is also proved by Viel in~\cite[Theorem 2.4.24]{V}.
  
\end{remark}

We are now ready to complete the proof of the main theorem. 

Proposition~\ref{fans} shows that the normal fan of the polytope $\Pi\cap\A_Q$ is precisely $\gg$-vector fan of $Q'$. According to the general form of~\cite[Theorem 10.2]{RS} (see also~\cite[Theorem 1.10]{YZ}), this implies that $\Pi\cap\A_Q=\A_{Q'}$, which proves  Theorem~\ref{main} in the crystallographic cases (as Theorem 10.2 in~\cite{RS} is stated for crystallographic case only).

In the non-crystallographic cases we proceed as follows. The proof of (the general form of)~\cite[Theorem 10.2]{RS} is based on the fact that mutations of $\gg$-vectors are given by Conjecture~7.12 of~\cite{FZ4}, while~\cite[Conjecture~7.12]{FZ4} is implied by the sign-coherence of $\cc$-vectors~\cite{NZ} (note that in the finite types the $\cc$-vectors are manifestly sign-coherent as they are all roots of the corresponding root system). As defined in~\cite{DT1,DT2}, mutations of $\gg$-vectors for types $H_3,H_4$ and $I_n$ also satisfy Conjecture~7.12 of~\cite{FZ4}, so all considerations above can be applied.

\end{document}